\newtheorem{theorem}{Theorem}[section]
\newtheorem{lemma}[theorem]{Lemma}
\newtheorem{claim}[theorem]{Claim}
\newtheorem{proposition}[theorem]{Proposition}
\newtheorem{corollary}[theorem]{Corollary}
\newtheorem{conjecture}[theorem]{Conjecture}
\newtheorem{question}[theorem]{Question}
\newcommand{\cF}{\mathcal{F}}
\newcommand{\cP}{\mathcal{P}}
\begin{document}
\date{\today}
\title{Keeping Avoider's graph almost acyclic}

\author[Dennis Clemens]{Dennis Clemens}
\address{Freie Universit\"at Berlin, Institut f\"ur Mathematik,
Arnimallee 3, 14195 Berlin, Germany}
\email{d.clemens@fu-berlin.de}

\author[Julia Ehrenm\"uller]{Julia Ehrenm\"uller}
\address{Technische Universit\"at Hamburg-Harburg, Institut für Mathematik, 
Schwarzenberg\-str. 95, 21073 Hamburg, Germany}
\email{julia.ehrenmueller@tuhh.de}

\author[Yury Person]{Yury Person}
\address{Goethe-Universit\"at, Institut f\"ur Mathematik,
  Robert-Mayer-Str. 10, 60325 Frankfurt am Main, Germany}
\email{person@math.uni-frankfurt.de}

\author[Tuan Tran]{Tuan Tran}
\address{Freie Universit\"at Berlin, Institut f\"ur Mathematik,  
Arnimallee 3, 14195 Berlin, Germany}
\email{manhtuankhtn@gmail.com}




\begin{abstract}
We consider biased $(1:b)$ Avoider-Enforcer games
in the monotone and strict versions.
In particular, we show that Avoider can keep his graph being a forest
for every but maybe the last round of the game
if $b \geq 200 n \ln n$. 
By this we obtain essentially optimal upper bounds on the threshold biases
for the non-planarity game, the non-$k$-colorability game,
and the $K_t$-minor game thus addressing a question and  
 improving the results of 
Hefetz, Krivelevich, Stojakovi\'c and Szab\'o. 
Moreover, we give a
slight improvement for the lower bound in 
the non-planarity game. 
\end{abstract}

\maketitle




\section{Introduction}
Avoider-Enforcer games can be seen as the misère version of the well-known Maker-Breaker games 
(studied first by Lehman~\cite{L64}, Chv\'atal and Erd\H{o}s~\cite{CE78} and Beck~\cite{B82,B08}). 
 This means that, while playing according to their conventional rules, 
the players' goal is to lose the game. 
The general setting of Avoider-Enforcer games can be summarized as follows. 
Let $X$ be a finite set and let $\cF \subseteq 2^{X}$. 
The two players, called \emph{Avoider} and \emph{Enforcer}, alternately 
occupy a certain number of elements of the so-called \emph{board} $X$. 
The game ends when all elements are claimed by 
the players. Avoider wins if for every so-called \emph{losing set} $F \in \cF$, he does not occupy all 
elements of $F$ by the end of the game.  Otherwise Enforcer wins. In particular, 
it is not possible that the game ends in a draw. We may assume that Avoider is always the 
first player since the choice of the player who is making the first move does not have an impact on our results. 

\medskip

In the following we will focus on games where the board $X$ is given by the edge set $E(K_n)$ of a complete graph
 and $\cF_n$ is some graph property to be avoided. 
Following Hefetz, Krivelevich, Stojakovi\'c, and Szab\'o~\cite{HKSS10}, we consider 
two different versions of Avoider-Enforcer games. Let $b$ be a positive integer. 
In the original, \emph{strict} $(1:b)$ Avoider-Enforcer game 
(as investigated  e.g.\ by Beck~\cite{B02,B08}, Hefetz, Krivelevich, and Szab\'o~\cite{HKS07} and by 
Lu~\cite{L91, L92, L95}), Avoider occupies exactly $1$ and Enforcer exactly $b$ unclaimed edges per round. 
 If the number of unclaimed edges is strictly less than $b$ when it is Enforcer's turn, then he must select all 
the remaining unclaimed edges. For these strict rules, we define the 
lower threshold bias $f_{{\cF}_n}^-$ to be the largest integer such that 
Enforcer has a winning strategy for the $(1:b)$ game on $(E(K_n),{\cF}_n)$ 
for every $b\leq f_{{\cF}_n}^-$; and the upper threshold bias $f_{{\cF}_n}^+$ to be 
the smallest non-negative integer such that Avoider has a winning strategy for every $b>f_{{\cF}_n}^+$. 
In general, $f_{\cF_n}^-$ and $f_{\cF_n}^+$ do not coincide as shown 
by Hefetz, Krivelevich, and Szab\'o~\cite{HKS07}.  

\medskip

In the \emph{monotone} $(1:b)$ Avoider-Enforcer game,  
 Avoider occupies at least $1$ and Enforcer at 
least $b$ unclaimed edges per round. Again, if the number of 
unclaimed edges is strictly less than $b$ when it is Enforcer's turn, 
then he must select all the remaining unclaimed edges. 
Games with these monotone rules are bias monotone, as it was shown by Hefetz, Krivelevich, Stojakovi\'c, 
and Szab\'o in~\cite{HKSS10}. This means that there exists a unique 
 threshold bias $f_{{\cF}_n}^{mon}$ which is defined as the non-negative integer for which Enforcer wins
the monotone $(1:b)$ game if and only if $b\leq f_{{\cF}_n}^{mon}.$ 

\medskip

One might wonder at this point whether for any family ${\cF}_n$
there is some general relation between the three thresholds mentioned above 
like $ f_{{\cF}_n}^{-}\leq  f_{{\cF}_n}^{mon}\leq  f_{{\cF}_n}^{+}.$
Indeed, if ${\cF}_n={\cF}_{P_3,n}$ is the family of all paths on 3 vertices of $K_n$,
then these inequalities hold, as shown by Hefetz, Krivelevich, Stojakovi\'c, 
and Szab\'o in \cite{HKSS10}. However,
these inequalities are not true in general
and in fact the outcome of some Avoider-Enforcer games
in the strict setting can differ a lot from the outcome of the corresponding
monotone games. For instance, it was also shown in~\cite{HKSS10} and by Hefetz, Krivelevich, and Szab\'o in \cite{HKS07} that
for the Avoider-Enforcer connectivity game,
where  ${\cF}_n=C_n$ is the family of all spanning trees of $K_n$,
we have $f_{C_n}^{mon}=\frac{n}{\ln n}(1+o(1))$,
while $f_{C_n}^{+}=f_{C_n}^{-}=\lfloor \frac{n-1}{2}\rfloor.$

\medskip

In the present paper, we will be studying biased strict and monotone Avoider-Enforcer games, 
where Enforcer's goal is to maintain an (almost) acyclic graph. This will have 
a series of improvements on the bias of various games such as planarity, colorability 
and minor games. Before stating our results we survey the 
relevant developments so far. 

\medskip
Define $NC_n^k$ to be the set consisting of the edge sets of 
all non-$k$-colorable graphs on $n$ vertices. 
It was proved by Hefetz, Krivelevich, Stojakovi\'c, 
and Szab\'o~\cite{HKSS08} that for every $k\geq 3$, 
Avoider can win the strict $(1:b)$ \enquote{non-$k$-colorability} game 
$NC_n^k$ against any bias larger than $2kn^{1+\frac{1}{2k-3}}$. 
On the other hand, it was shown by the same authors~\cite{HKSS08} 
that there exists a constant $s_{k}$ such that Enforcer 
has a strategy to win the game for every $b \leq  s_{k} n$. 
Moreover, in the same paper the authors mention that 
there exists a constant $c>0$ such that $cn \leq f_{NC_n^2}^-  \leq f_{NC_n^2}^+\leq n^{3/2}$. 

\medskip

Let $M_n^t$ denote the set of all edge sets 
of all graphs on $n$ vertices containing a $K_t$-minor.  
Playing against a bias larger than $2n^{5/4}$, 
Avoider can win the strict $(1:b)$ $K_t$-minor game $M_n^t$ for 
every $t\geq 4$ whereas if $b$ is almost as large as $n/2$ Enforcer has a 
winning strategy where $t$ is some constant power of $n$, see Hefetz et~al.~\cite{HKSS08}. 
It was proved by Hefetz, Krivelevich, Stojakovi\'c, 
and Szab\'o in~\cite{HKSS10} that the threshold bias 
for the monotone version is of order $n^{3/2}$ for $t=3$.  

\medskip

Finally, let us introduce the \enquote{non-planarity} Avoider-Enforcer game. 
Let $NP_n$ be the set consisting of the edge sets of all non-planar graphs on 
$n$ vertices. In the so-called \enquote{non-planarity} game $NP_n$, 
Avoider's task is to keep his graph planar. Hefetz et al.\ proved in \cite{HKSS08}  
that in the strict $(1:b)$ non-planarity game, Avoider can succeed against any bias larger than
$2n^{5/4}$. Furthermore, their proof also can be applied when considering the monotone rules instead.

\medskip

The main results of our paper are the following two theorems. 
The first theorem gives a lower bound of $200n\ln n$ on the bias such 
that both in the monotone and in the strict $(1:b)$ Avoider-Enforcer game, 
Avoider can keep his graph acyclic apart from at most one unicyclic component. 
\begin{theorem}
\label{thm:forestplus}
For $n$ sufficiently large and $b \geq 200 n\ln n$, Avoider can ensure that both in the monotone and in the strict $(1:b)$ Avoider-Enforcer game
by the end of the game Avoider's graph is a forest plus at most one additional edge. 
\end{theorem}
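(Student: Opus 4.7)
I would have Avoider play a greedy strategy that keeps the components of his graph small, so that ``within-component'' edges remain few. Setting $c := 400\ln n$, at each of his turns Avoider tries to pick a free edge whose endpoints lie in distinct components of his current graph $A$ with combined size at most $c$; if no such ``safe'' edge is free, he picks an arbitrary free edge, preferring a between-component one when available.

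\smallskip

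The heart of the analysis is a counting estimate under the invariant $\max_i |C_i| \le c$: by convexity,
\[
\sum_i \binom{|C_i|}{2} \;\le\; \frac{(c-1)\,n}{2} \;<\; b.
\]
So the total number of within-component pairs of vertices of $K_n$ is less than $b$, and hence the number of free edges remaining on the board at the first moment Avoider is \emph{stuck} (forced to play a within-component edge) is at most $b$. Since in both the strict and the monotone game every round consumes at least $b+1$ edges, the game must end within the very next round. Thus Avoider's graph acquires at most one cycle, giving the desired ``forest plus at most one additional edge'' conclusion.

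\smallskip

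The main technical obstacle is verifying that the invariant $\max_i |C_i| \le c$ can be maintained until (nearly) the last round. This reduces to showing that at each round $t \le T-1$, where $T \le \binom{n}{2}/(b+1) \le n/(400\ln n) + 1$, the number of free safe edges is strictly positive. Since Avoider has at most $T$ non-trivial components throughout the game, at least $n - 2T$ vertices remain isolated, and the $\binom{n-2T}{2}$ isolated-isolated edges alone form a large reservoir of safe edges; comparing this count to Enforcer's edge budget -- at most $\binom{n}{2}-t-1$ while any edge is free -- is where the constant $200$ in $b \ge 200 n \ln n$ is chosen so that the counting succeeds through round $T-1$. In the monotone setting, Enforcer's ability to claim more than $b$ edges per round only shortens $T$ (and tightens the remaining-edge bound), so the same analysis applies with minor bookkeeping.
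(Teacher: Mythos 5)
Your high-level idea is the same as the paper's: have Avoider keep his components small so that the number of within-component vertex pairs is less than $b$, and then argue that once Avoider is forced to play a cycle-closing edge the game has at most one round left. But the step you call the ``main technical obstacle'' is in fact the heart of the matter, and your sketch for it does not work.

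First, a structural issue with the invariant itself. With $\max_i|C_i|\le c$ and ``safe'' meaning combined size $\le c$, it is entirely possible that at some point every component has size in $(c/2,c]$, so that \emph{no} pair of components can be safely merged, even though the invariant still holds and plenty of between-component edges are free. Your strategy then either breaks the invariant (merging two large components) or skips its turn's ``safety'' and you lose control. Conversely, if you try to fix this by strengthening the invariant to $\max_i|C_i|\le c/2$, then a legal safe merge can immediately violate it. There is a genuine tension here that a single fixed size cap $c$ does not resolve.

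Second, the quantitative reservoir argument fails. Even granting the invariant, you want at least one free safe edge through round $T-1$ with $T\approx\binom{n}{2}/(b+1)\approx n/(400\ln n)$. The isolated--isolated pairs number at least $\binom{n-2t}{2}$ after $t$ rounds, and Enforcer has claimed at most $bt$ edges, so you would need $\binom{n-2t}{2}>bt$. Plugging in $t\approx T$ and $b=200n\ln n$ gives
\[
\binom{n-2T}{2}\approx\binom{n}{2}-\frac{n^2}{200\ln n},
\qquad
bT\approx\binom{n}{2}-\frac{n}{400\ln n},
\]
so $\binom{n-2T}{2}-bT\approx -\,n^2/(200\ln n)<0$: the reservoir is exhausted well before the end of the game, and there is no choice of the constant in $b\ge Cn\ln n$ that rescues this crude count. (Your phrase ``Enforcer's edge budget at most $\binom{n}{2}-t-1$'' only makes the comparison worse.) So the proposal does not actually show the invariant survives until the final round.

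The paper closes exactly this gap with a finer strategy and a finer invariant. Avoider does not merely pick any safe edge; he merges the two components whose combined size is \emph{smallest}, processed in increasing stages $k=1,\dots,t-1$, which gives much tighter control on the component profile. The key technical lemma (Claim~2.1 in the paper) is an induction showing that the number $n_k$ of components of size exactly $k$ satisfies $n_k\le n\bigl(k/(10\ln n)\bigr)^{k-1}$, i.e.\ the size distribution decays geometrically past $10\ln n$. This in turn bounds the number of free edges when the ``safe'' merges are exhausted (entering Stage $t$, with $t<\ln n/3$, so maximum component size far below your $400\ln n$) by fewer than $200n\ln n\le b$, whence at most one more round is played. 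That geometric decay estimate is precisely what is missing from your sketch, and a coarse cap on the maximum component size combined with an isolated-vertex reservoir cannot replace it.
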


In the strict $(1:b)$ game stated in the theorem below, 
 Avoider's task is to keep his graph acyclic for which he has again a 
winning strategy for some bias $b$ between $200n\ln n$ and $201n\ln n$.
\begin{theorem}
\label{thm:forest}
For $n$ sufficiently large, there is a bias $200n\ln n \leq b \leq 201n\ln n$
such that Avoider can ensure that in the strict $(1:b)$ Avoider-Enforcer game
by the end of the game Avoider's graph is a forest.
\end{theorem}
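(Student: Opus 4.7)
The plan is to bootstrap Theorem~\ref{thm:forestplus} by exploiting the freedom to choose $b$ within the interval $[200n\ln n,\,201n\ln n]$ in order to eliminate the ``plus one edge'' in its conclusion. First I would observe that for every $b$ in this range, the number of Avoider's moves is $t(b):=\lceil \binom{n}{2}/(b+1)\rceil = O(n/\ln n)$, which is far smaller than $n-1$. Consequently, Theorem~\ref{thm:forestplus} tells us that Avoider's graph ends as a forest or contains at most a single cycle, so the goal reduces to preventing any Avoider move from closing a cycle.

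My strategy for Avoider would refine the strategy from Theorem~\ref{thm:forestplus}: on each move, among all unclaimed edges Avoider prefers a \emph{safe} edge---one whose endpoints lie in different components of his current forest---playing it whenever at least one such edge exists, and only falling back on the strategy of Theorem~\ref{thm:forestplus} otherwise. Avoider creates a cycle only if at some move every unclaimed edge lies within one of his components. If $n_1,\ldots,n_c$ denote the component sizes of Avoider's forest just before such a move, then the number of unsafe unclaimed edges is at most $\sum_i \binom{n_i}{2}$. A balanced-merging rule---for instance, whenever Avoider has a choice he merges two of the smallest current components---should allow him to maintain a small-components invariant throughout, keeping this sum at most $O(n\ln n)$.

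The second ingredient is the arithmetic choice of $b$. As long as the number of unclaimed edges exceeds the above bound on unsafe edges, Avoider has a safe move. This inequality is easy to verify for all but Avoider's very last move, when only $r+1$ edges remain, where $r:=(\binom{n}{2}-1)\bmod(b+1)$. The key observation is that $r$ varies substantially as $b$ ranges over $[200n\ln n,\,201n\ln n]$: incrementing $b$ by $1$ shifts $r$ by roughly $-t(b)$ before wrapping around, so over an interval of length $n\ln n$ the remainder sweeps through $[0,b]$ many times. In particular, we can pick $b$ in the prescribed interval with $r \gg n\ln n$, comfortably exceeding the bound on unsafe edges at every move including the last.

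The hard part, I expect, will be establishing the small-components invariant against adversarial play by Enforcer. Enforcer will naturally restrict which edges remain unclaimed so as to steer Avoider's merges into one growing component, which would inflate $\sum_i\binom{n_i}{2}$ and threaten the whole argument. Quantitatively showing that a balanced-merging heuristic keeps Avoider's largest component polylogarithmic in $n$ against worst-case Enforcer play is the main technical obstacle; once that invariant is in place, the remainder calculation selecting a suitable $b$ in the prescribed range should be routine.
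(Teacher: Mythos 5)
Your plan tracks the paper's proof closely: reuse the strategy from Theorem~\ref{thm:forestplus}, and use the arithmetic freedom in choosing $b$ to guarantee that Avoider's final move can still be ``safe'' (joining two components). The paper realizes the remainder idea concretely: try $b_1 = \lceil 200.5\,n\ln n\rceil$; if the remainder $r_1$ of $\binom{n}{2}$ modulo $b_1+1$ is below $n\ln n$, shift down to $b = b_1 - \lceil 402\ln^2 n\rceil$, which boosts the remainder by $q_1\lceil 402\ln^2 n\rceil > n\ln n$ while keeping $b$ in the prescribed range. That is precisely the ``sweep'' phenomenon you describe, so that half of your argument is on target.

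The gap you flag, however, is not actually a gap, and this is the main thing you are missing. You propose a separate ``safe-edge preference'' layer and then worry about proving a small-components invariant against adversarial Enforcer play. But in the Theorem~\ref{thm:forestplus} strategy, all moves in Stages $1$ through $t-1$ are \emph{by construction} safe merges of two components, and since Stage $k$ only ever joins components of total size $k+1$, every component of Avoider's forest has size at most $t = O(\ln n)$ when Avoider enters Stage $t$ --- no invariant argument is needed, the bound is built into the stage definition. The only potentially unsafe move is the single edge Avoider claims in Stage $t$ (the proof of Theorem~\ref{thm:forestplus} already shows Stage $t$ lasts at most one round). At the start of that round the number of unclaimed edges is congruent to $r_1 \bmod (b+1)$ and, by the estimate in the Theorem~\ref{thm:forestplus} proof, is below $200n\ln n \leq b$, so it equals $r_1 \ge n\ln n$ exactly; meanwhile the unclaimed edges lying inside components number at most $\sum_i \binom{i}{2} n_i' \le (t-1)n < n\ln n/3$. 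Hence a safe edge exists, Avoider plays it, stays a forest, and Enforcer then takes the remaining $r_1 - 1 < b$ edges, ending the game. So your approach is correct; you should simply lean on the stage structure already built into the Theorem~\ref{thm:forestplus} strategy instead of treating the component-size bound as the open technical obstacle.
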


While these results are interesting in their own right, they can be applied directly 
   to three other games discussed above: the \enquote{non-$k$-colorability}, the \enquote{$K_t$-minor}, and 
the \enquote{non-planarity} Avoider-Enforcer games.

The two corollaries below are direct consequences of our main theorems above. In particular, 
these results improve upper bounds for $f_{NC_n^k}^+$ and $f_{NC_n^k}^{mon}$ with $k\geq 3$, 
and for $f_{NC_n^2}^-$. Furthermore better bounds are obtained for $f_{M_n^t}^+$ and $f_{M_n^t}^{mon}$ with $t \geq 4$ 
and for $f_{M_n^3}^-$. Finally, the bounds on $ f_{NP_n}^+$ and $ f_{NP_n}^{mon}$  are improved as well. 

\begin{corollary}
\label{cor:first}
For $n$ sufficiently large and $b \geq 200n\ln n$, Avoider can ensure that in the monotone/strict $(1:b)$ Avoider-Enforcer game
by the end of the game his graph is planar, $k$-colorable for $k\geq 3$, 
and does not contain a $K_t$-minor for $t\geq 4$.
Thus, 
$$f_{NP_n}^+, f_{NC_n^k}^+, \ f_{M_n^t}^+, f_{NP_n}^{mon}, f_{NC_n^k}^{mon}, \ f_{M_n^t}^{mon}\leq 200n\ln n.$$
\end{corollary}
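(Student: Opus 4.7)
The plan is to invoke Theorem~\ref{thm:forestplus} as a black box: for $b \geq 200\, n \ln n$, Avoider has a strategy in both the monotone and the strict $(1:b)$ game ensuring that his final graph $G$ is a forest plus at most one extra edge. From the definitions of $f^{+}$ and $f^{mon}$, the six stated upper bounds on the threshold biases follow at once, provided we verify that every such $G$ is planar, $k$-colorable for $k\ge 3$, and $K_t$-minor-free for $t\ge 4$.

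The first two properties are essentially immediate. Adding one edge to a forest produces either another forest (if the extra edge joins two components) or a graph with exactly one cycle; both cases are clearly planar. For colorability, a forest is bipartite, and the extra edge either keeps it bipartite or creates exactly one cycle which costs at most one additional color. Thus $G$ is $3$-colorable, hence $k$-colorable for every $k \geq 3$.

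For the minor property I would work with the cyclomatic number $\mu(H) := |E(H)| - |V(H)| + c(H)$, where $c(H)$ denotes the number of connected components of $H$; since $\mu(H)$ is the dimension of the cycle space, it is monotone under taking subgraphs. A direct count gives $\mu(G) \leq 1$. On the other hand, if $G$ contained $K_4$ as a minor with connected branch sets $V_1,\dots,V_4$ and a cross edge between each pair of them, then picking a spanning tree inside each $V_i$ together with one cross edge per pair yields a connected subgraph of $G$ on $\sum_i |V_i|$ vertices with $\sum_i(|V_i|-1) + 6$ edges, whose cyclomatic number is $3$. Hence $\mu(G) \geq 3$, a contradiction. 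Since a $K_t$-minor contains a $K_4$-minor whenever $t \geq 4$, this rules out all the remaining minors in one stroke.

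Essentially all the content sits in Theorem~\ref{thm:forestplus}; the remaining structural verification is short and poses no real obstacle, the cyclomatic-number calculation used to forbid $K_4$-minors being the only point that deserves a careful sentence.
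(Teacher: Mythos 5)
Your proof is correct and follows the same route as the paper: apply Theorem~\ref{thm:forestplus} and observe that a forest plus at most one edge is planar, 3-colorable, and $K_4$-minor-free. The paper dismisses these three verifications with a single "Clearly"; you supply the details, and your cyclomatic-number argument for excluding $K_4$-minors is a clean way to make that last point rigorous.
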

\begin{proof}
By Theorem~\ref{thm:forestplus}, Avoider can ensure that by the end of the game his graph is a forest plus at 
most one additional edge. Clearly, this graph is planar, 3-colorable, and does not contain a $K_4$-minor, proving the statement.
\end{proof}

\begin{corollary}
\label{cor:second}
For $n$ sufficiently large, there is a bias $200n\ln n \leq b \leq 201n\ln n$
such that Avoider can ensure that in the strict $(1:b)$ Avoider-Enforcer game
by the end of the game Avoider's graph is 2-colorable and does
not contain a $K_3$-minor. Thus, 
$$f_{NC_n^2}^-, f_{M_n^3}^-= O( n\ln n).$$
\end{corollary}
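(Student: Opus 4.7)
The plan is to deduce the corollary directly from Theorem~\ref{thm:forest}. By that theorem, for sufficiently large $n$ there exists a bias $b$ with $200n\ln n \leq b \leq 201n\ln n$ such that Avoider has a strategy in the strict $(1:b)$ Avoider-Enforcer game which guarantees that his final graph is a forest.

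The key observation is that a forest automatically satisfies both properties demanded by the corollary. First, a forest contains no cycles at all, so in particular no odd cycles; by König's theorem this means the graph is bipartite, hence $2$-colorable. Second, a graph contains a $K_3$-minor if and only if it contains a cycle: any cycle can be contracted to a single triangle, and conversely any $K_3$-minor arises from three internally disjoint paths forming a cycle. Consequently a forest is $K_3$-minor-free.

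It remains to translate the existence of this winning strategy into bounds on the lower threshold biases. Since Avoider wins the strict $(1:b)$ games on both $NC_n^2$ and $M_n^3$ at the specific bias $b$ produced by Theorem~\ref{thm:forest}, Enforcer does not win at this value of $b$, and therefore by the definition of $f^-$ we have $f_{NC_n^2}^-, f_{M_n^3}^- < b \leq 201 n \ln n$, which is $O(n\ln n)$.

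There is essentially no obstacle once Theorem~\ref{thm:forest} is in hand; the entire content of the corollary lies in the two structural remarks above (forests are bipartite and $K_3$-minor-free), both of which are elementary. The proof will therefore be a few lines long, amounting to an application of the theorem followed by these observations.
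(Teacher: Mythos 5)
Your proof is correct and follows exactly the paper's approach: apply Theorem~\ref{thm:forest} to get a forest, then note that forests are $2$-colorable and $K_3$-minor-free. The paper states these two structural facts as \enquote{obvious}; your added justifications (bipartiteness via absence of odd cycles, and $K_3$-minors corresponding to cycles) and the careful unwinding of the definition of $f^-$ are all sound, though invoking König's theorem is a heavier reference than needed for the elementary fact that trees are $2$-colorable.
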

\begin{proof}
By Theorem~\ref{thm:forest}, Avoider can ensure that by the end of the game his graph is a forest. 
Obviously, this graph is 2-colorable and does not contain a $K_3$-minor, proving the statement.
\end{proof}

Hefetz, Krivelevich, Stojakovi\'c, 
and Szab\'o conjectured in~\cite{HKSS08} that 
the Avoider-Enforcer non-planarity,
non-$k$-colorability and the $K_t$-minor games should be 
\emph{asymptotically monotone} as $n$ tends to infinity.
That is their upper and lower threshold should be of the same order, 
i.e.\ $f_{\cP_n}^-=\Theta( f_{\cP_n}^+ )$. 
Since in each of the three games we have lower bounds on 
$f_{\cP_n}^-$ that are linear in $n$, the Corollaries~\ref{cor:first} and~\ref{cor:second} 
 show that the threshold biases are at most $O(\ln n)$ factor apart, 
thus giving additional evidence that this conjecture might be true.

\medskip

Coming back to the $(1:b)$ non-planarity Avoider-Enforcer game, it was also proved in~\cite{HKSS08} that in the strict version Enforcer can win whenever $b \leq \frac{n}{2} - o(n)$. 
Moreover, with a slight modification of the proof, the same result can be obtained
for the monotone rules.
We improve this bound as well. 

\begin{proposition}
\label{prop:lowerbound}
For $n$ sufficiently large and $b\leq 0.59n$, Enforcer
can ensure that both in the monotone and in the strict $(1:b)$ Avoider-Enforcer game, 
Avoider creates a non-planar graph.
Thus, $$0.59 n \leq f_{NP_n}^{mon},\;f_{NP_n}^- .$$
\end{proposition}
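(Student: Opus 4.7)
The plan is to refine the strategy of Hefetz, Krivelevich, Stojakovi\'c, and Szab\'o~\cite{HKSS08}, who established $f_{NP_n}^- \geq n/2 - o(n)$. I first observe that the monotone bound follows from the strict one: in the monotone $(1:b)$ game, Enforcer may simply play exactly $b$ edges per round. So I focus on exhibiting a strategy in the strict version, where the monotone bound then follows directly.

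The heart of the argument is to have Enforcer play so as to force Avoider's $T = \lceil \binom{n}{2}/(b+1) \rceil$ edges to be \emph{concentrated} on a small vertex subset $V^\star \subseteq V(K_n)$ of size $|V^\star| = \alpha n$ for a suitably chosen $\alpha \in (0,1)$. Recall that any graph on $\alpha n$ vertices with more than $3 \alpha n - 6$ edges is non-planar by Euler's formula; so if Avoider can be forced to place more than $3 \alpha n - 6$ of his edges within $V^\star$, his graph is non-planar. The optimization that balances Enforcer's capacity to claim edges outside $V^\star$ against Avoider's edge count inside $V^\star$ is what gives rise to the threshold $b \leq 0.59 n$.

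In more detail, Enforcer's strategy is, in each round, to play $b$ edges selected by a priority rule that maximally pushes Avoider's graph into $V^\star$; roughly speaking, Enforcer targets unclaimed edges incident to $V(K_n) \setminus V^\star$ first. A potential function tracks both the number of edges inside $V^\star$ that Avoider has been forced to claim so far and the number of edges outside $V^\star$ that remain unclaimed. Analyzing this potential over the course of the game, one shows that at the end Avoider has strictly more than $3 \alpha n - 6$ edges inside $V^\star$, so his graph is non-planar by the Euler bound.

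The main obstacle is that Avoider can try to disrupt the concentration by playing edges incident to $V(K_n) \setminus V^\star$, thereby both \enquote{stealing} edges from Enforcer's preferred pool and polluting Avoider's effective vertex set with vertices outside $V^\star$. I would handle this via an \emph{adaptive} choice of $V^\star$: rather than fixing $V^\star$ in advance, Enforcer updates it during the game, ensuring it always contains the vertices most heavily used by Avoider. The technical core of the argument is a careful bookkeeping showing that these adaptive updates preserve the invariant that Avoider's restriction to $V^\star$ is dense enough, and the constant $0.59$ emerges from solving the associated balance inequality (approximately $\alpha \approx \sqrt{1/1.7}$) at its optimal value.
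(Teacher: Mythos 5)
Your approach as written has a serious quantitative gap, and you are missing the key structural idea of the paper's proof.

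First, the bookkeeping claim that \enquote{the monotone bound follows directly from the strict one because Enforcer may play exactly $b$ edges per round} is too quick: in the monotone game Avoider may also claim several edges in a round, which shortens the game and invalidates any round-count-based budget. The paper handles this by defining an auxiliary subgraph $A^*\subseteq A$ consisting of exactly one Avoider edge per round and running the strict strategy against $A^*$; since non-planarity is monotone under adding edges, making $A^*$ non-planar suffices. This extra step is needed and your proposal does not contain it.

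More seriously, the paper's improvement from $\frac{n}{2}$ to $0.59n$ does not come from concentrating Avoider's edges on a tiny set and then invoking the plain bound $|E| \leq 3|V|-6$. That bound requires the edge count to exceed roughly three times the vertex count. But even with the paper's quantitative isolation result (Proposition~\ref{lower:proposition}), Enforcer can only shrink Avoider's vertex set to size about $(1-c)\frac{n^2}{2b}$, which for $b\approx 0.59n$ is still about $0.85n$; Avoider's final edge count is only about $\frac{n^2}{2b}\approx 0.85n$, so the ratio of edges to vertices is barely above $1$ and the naive Euler bound gives nothing. Your estimate $\alpha\approx\sqrt{1/1.7}\approx 0.77$ runs into exactly this problem: $3\alpha n - 6 \approx 2.3n$ dwarfs the $\approx 0.85n$ edges available. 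The paper's actual mechanism is to have Enforcer split his bias and devote a lower-order part $b_1 = o(b)$ to \emph{preventing short cycles} via a Bednarska--\L{}uczak type strategy, so that Avoider's graph has girth greater than $k$. This upgrades Euler's bound to $|E| < \frac{k}{k-2}(|V|-2)$, whose slope $\frac{k}{k-2}$ can be pushed arbitrarily close to $1$; then even a tiny constant improvement $c$ in the isolation proposition suffices. This girth ingredient is absent from your proposal, and without it the edge-vs-vertex count cannot close. The \enquote{adaptive $V^\star$} idea is also left too vague to substitute: you would need to prove a far stronger isolation bound (roughly $|V^\star|\lesssim 0.28n$) than anything that appears achievable by the greedy isolation arguments, and you give no such estimate.
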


It should be mentioned that for the sake of readability, we do not optimize the constants in our theorems and proofs.
Our graph-theoretic notation is standard and follows \cite{D05}. In particular, given a graph $G$ its vertex set 
is denoted by $V(G)$ and its edge set by $E(G)$. The rest of the paper is organized as follows. In Section~$2$ 
we prove the two main results, namely Theorem~\ref{thm:forestplus} and Theorem~\ref{thm:forest}.  
In Section~$3$ we study the non-planarity Avoider-Enforcer game 
and prove Proposition~\ref{prop:lowerbound}. Finally, in Section~$4$ we discuss some open problems.




\section{Forests and almost forests}

\begin{proof}[Proof of Theorem~\ref{thm:forestplus}]

Let $n$ be large enough and let $b\geq 200n\ln n$. In the following we will provide Avoider
with a strategy that ensures that by the end of the game Avoider's graph is a forest plus at 
most one additional edge.

Let $t$ be the  smallest integer with 
\begin{equation}\label{eq:t}
n \left(\frac{t+1}{10\ln n}\right)^t<3. 
\end{equation}
 An easy calculation shows that $t=\Theta(\ln n)$, in particular, we have for large $n$ that 
\begin{equation}\label{eq:t_est}
 t<\ln n/3.
\end{equation}
To succeed, Avoider will play according to $t$ stages in increasing order 
and each stage 
will last several rounds where it is possible that a stage lasts zero rounds.
In the first $t-1$ stages, Avoider always claims exactly one edge in each round,
connecting two components of his forest such that the sum of their sizes is minimal
 (whenever we talk about components, we mean the components of Avoider's forest).  
In the last stage, which will be shown to last at most one round, 
Avoider will claim an arbitrary further edge. We refer to edges, neither taken by Avoider 
nor by Enforcer, as \emph{unclaimed} edges.

Starting with Stage~$1$, Avoider plays according to the following rules. 
\medskip

\textbf{ Stage $k$ (for $k\in[t-1]$).}
If there exists an unclaimed edge $e$ between two components $T_1$ and $T_2$
with $|V(T_1)|+|V(T_2)|=k+1$, Avoider claims such an edge, thus creating
a component on the vertex set $V(T_1)\cup V(T_2)$. 
Then it is Enforcer's turn and the round is over.

Avoider is going to play according to Stage~$k$ 
in the next round as well. 
If there is no such edge $e$ to be claimed at Stage~$k$, Avoider proceeds with Stage $k+1$. 
(As mentioned above it might happen that there is no edge to be claimed at Stage~$k$ already when Avoider enters Stage $k$.
In that case, this stage lasts zero rounds, and Avoider immediately proceeds with Stage $k+1$.)

\medskip

\textbf{ Stage $t$.} In every further round, Avoider claims exactly
one arbitrary free edge.

\medskip

It is obvious that Avoider can follow the strategy. Moreover, it is easy to see
that as long as Avoider plays according to the strategy of the first $t-1$ stages,
his graph remains a forest. Thus, in order to show that the above described strategy
is indeed a winning strategy, it remains to show that the last stage 
lasts at most one round. However,  we prove the following claim first.


\begin{claim}
\label{claim:comp}
Let $k \leq t$ and let $n_k$ be the number of components of size exactly $k$
that Avoider creates when playing according to the strategy. Then 
\[ n_k \leq n \left(\frac{k}{10\ln n}\right)^{k-1}.\]
\end{claim}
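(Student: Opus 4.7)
The plan is to argue by induction on $k$, with the base case $k=1$ being immediate since $n_1 \leq n$ trivially. For the inductive step, assume the bound holds for all $j \leq k-1$ and consider the state of the game at the end of stage $k-1$ (equivalently, the start of stage $k$). Let $c_j$ denote the number of components of size exactly $j$ at this moment; note $c_k = n_k$, because size-$k$ components are created only during stage $k-1$ and cannot be absorbed until stage $k$ begins, and $c_j \leq n_j$ for $j \leq k-1$. Summing the inductive bounds (which decrease super-geometrically in the relevant range $j \leq t \leq \frac{1}{3}\ln n$) shows that only an $o(n)$ fraction of vertices lie in non-singleton components, and hence $c_1 \geq n(1-o(1))$.

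The key structural observation is that, by the termination condition of Avoider's stage-$(k-1)$ rule, no unclaimed edge remains between any pair of components whose sizes sum to $k$; moreover, any such edge cannot belong to Avoider, since Avoider claiming it would have merged its endpoints' components. Hence every such edge is Enforcer's. Specializing to the pair (singleton, size-$(k-1)$ component) yields the inequality $(k-1)\, c_1\, c_{k-1} \leq E_{k-1}$, where $E_{k-1}$ denotes Enforcer's total number of edges at the end of stage $k-1$. In the strict game we have $E_{k-1} = R_{k-1}\, b$, where $R_{k-1} = n_2 + \cdots + n_k$; in the monotone game the same estimate is effective, because Enforcer gains nothing by playing more than $b$ edges in a round (it only shortens the game without increasing $n_k$). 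By the inductive hypothesis $R_{k-1} \leq n_k + O(n/\ln n)$, and the overall game length is bounded by $\binom{n}{2}/(b+1) = O(n/\ln n)$.

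To convert this into an upper bound on $n_k$ I combine the structural inequality with the counting identity $n_k = \sum_{a+b=k,\ a\leq b} m_{a,b}$, where $m_{a,b}$ is the number of $(a,k-a)$-merges in stage $k-1$, and the trivial upper bounds $m_{a,b} \leq \min(n_a, n_{k-a})$ supplied by induction. The dominant merge type is $(1,k-1)$ with $m_{1,k-1} = n_{k-1} - c_{k-1}$, while the contributions of $m_{a,k-a}$ for $a \geq 2$ are of lower order. Balancing all these against the specific bias $b \geq 200n\ln n$ produces precisely the factor $(10\ln n)^{-(k-1)}$, from which the claim follows after elementary algebra.

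The main obstacle will be extracting the desired \emph{upper} bound on $n_k$ from an inequality that naturally yields an upper bound on $c_{k-1}$ (and hence only a lower bound on $m_{1,k-1}$, and thus on $n_k$). To get around this, one uses the full sum of forced Enforcer edges $\sum_{a+b\leq k,\ a\leq b} ab\, c_a c_b \leq E_{k-1}$ across all sum values at most $k$, together with the inductive bounds on the $c_j$'s, and carefully calibrates the constants so that the available Enforcer budget $R_{k-1}\, b$ indeed forces the claimed logarithmic decrease. The monotone case is then reduced to the strict analysis via the observation that optimal Enforcer play takes exactly $b$ edges per round.
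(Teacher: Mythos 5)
Your proposal keeps the right induction framework and correctly notes that the termination rule of Stage~$k-1$ forces all edges between component pairs of total size $k$ to belong to Enforcer. But the core argument has a gap that you flag yourself and do not actually close. Your structural inequality $(k-1)\,c_1 c_{k-1}\le E_{k-1}$, and its extension $\sum_{a+b\le k}ab\,c_a c_b\le E_{k-1}$, bounds the number $c_{k-1}$ of size-$(k-1)$ components \emph{surviving} at the end of Stage~$k-1$. A small $c_{k-1}$ only tells you that \emph{many} $(1,k-1)$-merges happened, which is a \emph{lower} bound on $n_k$, not an upper bound. Summing over all pairs with size sum at most $k$ does not repair this: those constraints are still about surviving component counts, and they are dominated by the uninformative $a=b=1$ term $\approx\binom{c_1}{2}$, which reflects only that Stage~$1$ has ended. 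Moreover $E_{k-1}=b\cdot(\text{rounds so far})=b(n_2+\cdots+n_k)$ already contains $n_k$ on the right-hand side, making the inequality self-referential; even if it could be untangled, it points in the wrong direction. Your closing sentence that one ``carefully calibrates the constants'' is precisely the step that is missing.

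The paper's proof goes a different and more direct route. The quantity $n_k$ equals the number of \emph{rounds} in Stage~$k-1$, so one needs an upper bound on that duration, not on any Enforcer edge count. By the termination rule of the earlier stages, when Stage~$k-1$ begins the only unclaimed edges either run between components of combined size at least $k$ (at most $\sum_{1\le i\le j\le k-1,\ i+j\ge k} ij\,n_i n_j$ of them, via the induction hypothesis) or lie inside a single component (at most $(k-1)n$ of them). Since every round of Stage~$k-1$, except possibly the last, consumes at least $b+1$ unclaimed edges, the number of rounds is at most this total divided by $b+1$, plus one — this is inequality~\eqref{eq:nk_est}, which is then simplified using the induction hypothesis and the choice of $t$. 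The decisive quantity is therefore the \emph{unclaimed}-edge budget at the start of the stage, and counting Enforcer's edges at the end gives no leverage on it.
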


\begin{proof}
The claim is obviously true for $k= 1$. So, let $k>1$ and we proceed by induction. 
Observe that Avoider only creates components of size $k$ when he plays according to 
Stage~$k-1$. Thus, the number of such components is bounded from above
by the number of rounds that Stage $k-1$ lasts. 
When Avoider enters Stage~$k-1$ every existing component contains at most $k-1$ vertices 
and there are no unclaimed edges between two arbitrary components $T_1$ and $T_2$ with 
$|V(T_1)|+|V(T_2)| \leq k-1$. 
In particular, every unclaimed edge is either 
between two components $T_1$ and $T_2$ with $|V(T_1)|+|V(T_2)| \geq k$ or 
between two vertices within the same component which has size at most $k-1$.  
Obviously, the first case contributes at most $\sum_{1\le i \le j\le k-1\colon i+j\ge k}i j n_i n_j$
unclaimed edges. For the second case we find an upper bound of $(k-1)n$ by the following reason:
Let $n'_i$ denote the number of components of order $i$ after the end of Stage~$k-1$. 
Then the number of unclaimed edges within components after $k-1$ stages is at most 
$\sum_{i=1}^k \binom{i}{2}n'_i\le (k-1)\sum_{i=1}^k in'_i=(k-1)n $, since 
$\sum_{i=1}^k in'_i=n$.

Thus, at the beginning of Stage~$k-1$, the number of unclaimed edges is at most 
$\sum_{1\le i \le j\le k-1\colon i+j\ge k}i j n_i n_j + (k-1) n$.
Since in each but possibly the last round at least $b+1$ edges are claimed ($1$ by Avoider and $b$ by Enforcer), 
we can 
bound the number of components of size $k$ in Avoider's graph by
\begin{equation}\label{eq:nk_est}
n_k \le  \frac{1}{b+1} \left(\sum_{1 \le i \le j \le k-1\colon i+j\ge k} i j n_i  n_j + (k-1)n\right)+1.
\end{equation}
We use the induction hypothesis to estimate the sum $\sum_{1 \le i \le j\le k-1\colon i+j=s} i j n_i  n_j$ for $s=k$, \ldots, $2k-2$ as follows:
\begin{equation}\label{eq:ij_s}
 \sum_{\substack{1 \le i \le j \le k-1\\ i+j=s}} i j n_i  n_j\le  
\frac{n^2}{(10\ln n)^{s-2}}\sum_{\substack{1 \le i \le j \le k-1\\ i+j=s}} i^ij^j \le \frac{n^2}{(10\ln n)^{s-2}}\sum_{\substack{1 \le i \le j \le s-1\\ i+j=s}} i^ij^j.
\end{equation}
On the other hand, for $s \neq 6$ we have
\begin{multline}
\label{eq:ij_est}
\sum_{\substack{1 \le i \le j \le s-1\\ i+j=s}} i^ij^j < s^{s-1}+\sum_{2\le i \le s/2} i^is^{s-i} = s^{s-1} \left(1+s\sum_{2 \le i \le s/2} \left(\frac{i}{s}\right)^i\right) \\
\le s^{s-1} \left(1+s\sum_{2 \le i \le s/2} \left(\frac{2}{s}\right)^2\right)<3s^{s-1}. 
\end{multline}
For $s=6$, it is easy to check that 
\begin{equation}
\label{eq:s=6}
\sum_{\substack{1 \le i \le j \le s-1\\ i+j=s}} i^ij^j<3s^{s-1}.
\end{equation}
Therefore, we simplify~\eqref{eq:nk_est} using~\eqref{eq:ij_s},~\eqref{eq:ij_est},~\eqref{eq:s=6} and $b\ge 200n\ln n$ to

\begin{multline*}
n_k \le \frac{1}{200n\ln n} \left(\sum_{s=k}^{2k-2} 30n^2 \ln n \left(\frac{s}{10\ln n}\right)^{s-1}  + (k-1)n\right)+1\\
\le  \frac{3n}{20}\left(\frac{k}{10\ln n}\right)^{k-1}\sum_{s=k}^{2k-2}2^{k-s}+\frac{k-1}{200\ln n}+1\\
\overset{\eqref{eq:t_est}}{\le}   \frac{3n}{10}\left(\frac{k}{10\ln n}\right)^{k-1}+2\overset{\eqref{eq:t}}{\le} n\left(\frac{k}{10\ln n}\right)^{k-1},
\end{multline*}
where in the second inequality we estimated
\begin{multline*}
\left(\frac{s}{10\ln n}\right)^{s-1}=\left(\frac{k}{10 \ln n}\right)^{k-1} \prod_{i=1}^{s-k} \frac{k+i-1}{10 \ln n} \left(1+\frac{1}{k+i-1}\right)^{k+i-1} \\
\le \left(\frac{k}{10\ln n}\right)^{k-1} \left(\frac{2ke}{10 \ln n}\right)^{s-k} \overset{\eqref{eq:t_est}}{\le} \left(\frac{k}{10 \ln n}\right)^{k-1} 2^{k-s}.
\end{multline*}
This completes the proof of Claim \ref{claim:comp}.
\end{proof}


\medskip

Now, analogously to the calculation of the proof of Claim~\ref{claim:comp} it follows that, 
when Avoider enters the last stage, Stage~$t$, 
the number of remaining unclaimed edges is bounded by 
\begin{multline*}
 \sum_{\substack{1 \le i \le j \le t\\ i+j\ge t+1}} i j  n_i  n_j + t n\le  
 \sum_{s=t+1}^{2t} 30n^2 \ln n \left(\frac{t+1}{10\ln n}\right)^t 2^{t+1-s}  + tn\\
\overset{\eqref{eq:t}}{\le} 180 n \ln n+tn< 200n\ln n
\end{multline*}
by the choice of $t$ ($t<\ln n/3$) 
and for $n$ sufficiently large.
Thus, this last stage lasts at most one round.
\end{proof}

Now we turn to the case of the strict rule, when Enforcer has to claim 
exactly $b$ edges during each round (except possibly for the last one).

\begin{proof}[Proof of Theorem~\ref{thm:forest}]
We will show below that for large enough $n$, there exists $b$ 
with $200n\ln n\leq b\leq 201n\ln n$ and the remainder of $\binom{n}{2}$ divided by $b+1$ is 
at least $n\ln n$. 

Before proving this claim let us explain how the theorem follows then.
Let $b$ be given as above. Avoider now plays according to the same strategy
as given in the proof of Theorem~\ref{thm:forestplus} until he reaches Stage~$t$, 
where again $t$ is the smallest integer with $n \left(\frac{t+1}{10\ln n}\right)^t<3$. 
At this point, Avoider's graph is still a forest, the components of which are all
of size at most $t$.  
Analogously to the monotone case, there can be at most
$ t n < n\ln n /3$ unclaimed edges within components. 
However, since the remainder of the division $\binom{n}{2}/(b+1)$ is at least $n\ln n$, there exist unclaimed edges 
connecting two different components when Avoider enters Stage $t$ (provided $n$ is large enough). 
Now, Avoider just claims one such edge arbitrarily.
His graph remains a forest and afterwards, Enforcer must take all remaining edges.
Observe that in the case when Avoider is the second player, he does not even claim an edge
in the last round.

\medskip

So, it only remains to prove the above mentioned claim. Let $b_1=\lceil 200.5n\ln n\rceil.$
Moreover, let 
\[\binom{n}{2}=q_1(b_1+1)+r_1\ \text{ with }\ 0\leq r_1\leq b_1 \text{ and } q_1 \sim \frac{n}{401\ln n}.\]
If $r_1>n\ln n,$ we are done by setting $b=b_1$.
Otherwise, let
$b=b_1-\lceil 402 \ln^2 n\rceil.$
Then
$$\binom{n}{2}=q_1(b+1)+(r_1+q_1 \lceil 402 \ln^2 n\rceil).$$
Moreover, for large enough $n$, we obtain
$r_1+q_1 \lceil 402 \ln^2 n\rceil<b$, and therefore the remainder of the 
division $\binom{n}{2}$ by $(b+1)$ 
is at least $r_1+q_1 \lceil 402 \ln^2 n\rceil > n\ln n$, 
while $200n\ln n\leq b\leq 201n\ln n$.
\end{proof}




\section{Lower bound in the non-planarity game}
Before obtaining a lower bound for the non-planarity Avoider-Enforcer game in Proposition~\ref{prop:lowerbound}, 
we analyze another strict game where two players, the first player (denoted by FP) 
and the second player (denoted by SP), claim \emph{exactly} $1$ and $b$ edges, 
respectively.

\begin{proposition}\label{lower:proposition}
Let $c=1/1000$.
For $n$ sufficiently large and every $0.49n\leq b\leq 0.59n$
the second player in a strict $(1:b)$ game on 
$E(K_n)$ can isolate at least
$$n - (1-c)\frac{n^2}{2b} \quad \text{ vertices,}$$
i.e.\ claim all edges that are incident to these vertices.

\end{proposition}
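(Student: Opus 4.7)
My plan is to give SP (the second player, with bias $b$) an explicit strategy and then bound the number of isolated vertices via a careful classification of FP's edges.

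\emph{Strategy.} In each round, after FP plays, SP first updates the set $A$ of ``alive'' vertices (those not incident to any FP edge so far) by removing the at most two vertices just touched by FP. Then SP claims $b$ unclaimed edges, giving strict priority to \emph{alive--alive} edges, i.e., edges with both endpoints in the current $A$; if fewer than $b$ such edges are available, SP claims all of them and fills his remaining moves with arbitrary unclaimed edges. The key observation is that at the end of the strict game, every vertex in $A_{\mathrm{final}}$ has all of its incident edges claimed by SP (since FP never touched it and, at the end, every edge has been claimed by someone). Therefore the number of vertices SP isolates equals $|A_{\mathrm{final}}| = n - |V(G_{FP})|$, where $G_{FP}$ denotes FP's graph. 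It suffices to show $|V(G_{FP})| \leq (1-c)n^2/(2b)$.

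\emph{Edge classification.} For each FP edge, let its \emph{type} be the number of alive endpoints it has \emph{at the moment it is played}, so each edge is of type $0$, $1$, or $2$. Denote the respective counts by $e_0, e_1, e_2$; these satisfy $e_0 + e_1 + e_2 = R := \lfloor \binom{n}{2}/(b+1) \rfloor \approx n^2/(2b)$ and $|V(G_{FP})| = e_1 + 2e_2$, since each type-$1$ edge adds one new vertex to $V(G_{FP})$ and each type-$2$ edge adds two. A short calculation shows that the desired bound $|V(G_{FP})| \leq (1-c)R$ is equivalent to
\[
(1-c)\,e_0 \;\geq\; c\,e_1 + (1+c)\,e_2,
\]
which, for $c = 1/1000$, essentially requires $e_0 \gtrsim e_2$.

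\emph{Key step and main obstacle.} The trivial identity $e_1 + 2e_2 \leq n$ (since $|A|$ cannot go below zero) gives $e_0 = R - e_1 - e_2 \geq R - n + e_2$. This handles the case $R \geq (1+c)n$ automatically, but for our regime $0.49n \leq b \leq 0.59n$ we may have $R$ strictly less than $n$, and we must bring SP's priority rule to bear. Specifically, in every round in which FP plays a type-$2$ (alive--alive) edge, SP also claims up to $b$ further alive--alive edges by the priority rule; moreover, that round decreases the alive--alive supply not only by these $b+1$ claimed edges but additionally by $2|A|-3$ edges that cease to be alive--alive because two vertices left $A$. Since the initial alive--alive supply is $\binom{n}{2}$, a careful bookkeeping of consumption versus supply should establish $e_2 = O(cR)$
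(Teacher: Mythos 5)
Your approach is genuinely different from the paper's: the paper has SP actively target a small batch of vertices and, by a tournament-style elimination over a few rounds, fully isolate one of them (isolating a vertex every 2, 3, or 4 rounds depending on $b$), and then simply counts how many such ``tournaments'' fit into the game. Your approach instead gives SP a passive priority rule (always grab alive--alive edges) and tries to close the argument with a global accounting of FP's edge types $(e_0,e_1,e_2)$. That is a reasonable idea, but the proposal has two genuine gaps, the second of which appears fatal in the stated form.

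First, the argument is incomplete exactly at the place it matters: you write that ``a careful bookkeeping of consumption versus supply should establish $e_2 = O(cR)$'' but never carry it out. Since you have already observed that the trivial bound $e_1+2e_2\le n$ only handles the regime $R\ge(1+c)n$, which does not cover most of $[0.49n,0.59n]$, this missing step is the entire content of the proof in the hard range.

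Second, and more seriously, the missing claim is false for the strategy as you describe it. Suppose FP always plays an alive--alive edge while one exists; call this phase length $T$, so $e_2\ge T$. At the end of this phase every edge inside $A$ is claimed, and FP's own claims all have at least one dead endpoint, so all $\binom{n-2T}{2}$ edges inside the final $A$ were claimed by SP. SP has claimed at most $bT$ edges in $T$ rounds, so $\binom{n-2T}{2}\le bT$. For $b=0.59n$ and $T=\alpha n$ this reads $\tfrac{(1-2\alpha)^2}{2}\le 0.59\alpha$, which forces $\alpha\ge 0.236$ approximately. Hence $e_2\ge 0.23n\approx 0.28R$, which is nowhere near $O(cR)$ for $c=1/1000$. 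Worse, after the alive--alive supply is exhausted, your strategy lets SP claim \emph{arbitrary} edges, so FP can then run $\approx 0.53n$ type-$1$ rounds killing the remaining alive vertices one by one, and the total number of rounds $R\approx 0.85n$ comfortably allows this. In that scenario $e_1+2e_2$ reaches $n$, which exceeds $(1-c)R\approx 0.85n$, so the target inequality $|V(G_{FP})|\le(1-c)n^2/(2b)$ fails. To rescue the approach you would at minimum need a second priority rule after alive--alive edges run out (e.g.\ SP should then eat cross edges from $A$ to the dead set, so FP is starved of type-$1$ moves), plus an actual quantitative analysis of both phases; as written the proposal does not prove the proposition.
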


\begin{proof}
\textbf{Case 1. ($\mathbf{0.49n\leq b \leq 5n/9.}$)}
As long as there are at least 4 vertices not isolated by the second player (SP) and
not touched by the first player (FP), SP
can isolate a vertex in every fourth round.
Indeed, assume SP isolated a vertex in the previous round
and now wants to isolate one vertex within the next 4 rounds.
He fixes 4 vertices $v_1$, $v_2$, $v_3$, $v_4$ that are neither isolated
by him nor touched by FP. In a first round,
SP claims all edges between these 4 vertices and
at each $v_i$ he additionally claims $\lfloor (b-6)/4 \rfloor$ 
arbitrary incident edges. Now, it is FP's turn. He can touch at most 
one of these four vertices since all edges between them are already claimed by SP.
Without loss of generality, $v_1$, $v_2$, and $v_3$ are still untouched by FP.
Now in the second round SP claims at each of these three vertices $\lfloor b/3 \rfloor$ 
arbitrary incident edges. Again, FP can touch at most one of these three vertices at his turn. 
Without loss of generality, $v_1$ and $v_2$ are still untouched by FP after that. 
In the third round, SP claims  at each of these two vertices $\lfloor b/2 \rfloor$ arbitrary incident edges.
After FP's next turn, w.\,l.\,o.\,g.~$v_1$ is still untouced by FP. Now, SP simply claims all remaining incident edges at $v_1,$
which is possible since
$3+ \lfloor (b-6)/4 \rfloor + \lfloor b/3 \rfloor + \lfloor b/2 \rfloor + b > n,$
for large $n.$  Note that while SP isolates one vertex, FP can touch at most 8 other vertices.
It follows that the number of
vertices that SP isolates in total is at least $\lfloor n/9 \rfloor\geq n - (1-c)\frac{n^2}{2b}$.

\medskip

\textbf{Case 2. ($\mathbf{5n/9\leq b \leq 11n/19.}$)}
Analogously to Case 1, SP
can isolate a vertex in every third round as long as there are at least 3 vertices
not touched by FP. This time,
SP starts by only fixing three vertices $v_1,v_2,v_3$ and 
isolates then one of them within three rounds,
which is possible since 
$2 + \lfloor (b-3)/3 \rfloor + \lfloor b/2 \rfloor + b > n,$
for large $n.$ 
It follows then that SP isolates 
at least $\lfloor n/7 \rfloor\geq n - (1-c)\frac{n^2}{2b}$ vertices in total.

\medskip

\textbf{Case 3. ($\mathbf{11n/19\leq b \leq 0.59n.}$)}
Analogously to Case 2, SP
can isolate a vertex in every third round as long as there are at least 3 vertices
not touched by FP. 
In a first phase, SP follows the above described strategy and he isolates 
$n-1.5b$ vertices, which happens in at most $3n-4.5b$ rounds.
During this phase, 
FP can touch at most $6n-9b$ vertices.
Afterwards, for every vertex that is
neither isolated by SP nor touched by FP,
SP only needs to claim at most $1.5b$ further incident edges
in order to isolate it.  
But then, analogously to the previous cases, SP can isolate 
one vertex in every second round,
since 
$1+\lfloor (b-1)/2 \rfloor + b \geq 1.5b.$
Thus, in the second phase after at most  $3n-4.5b$ rounds, 
SP isolates a vertex in every second round 
as long as possible.
Since at the beginning of the second phase
at least $n-(n-1.5b)-(6n-9b)=10.5b-6n$
vertices were neither isolated by SP nor touched by FP,
SP can isolate at least $(10.5b-6n)/5$
further vertices.
In total SP will isolate at least 
$(n-1.5b)+(10.5b-6n)/5\geq n - (1-c)\frac{n^2}{2b}$
 vertices.
\end{proof}

\begin{lemma}
\label{lem:nonplanarstrict}
For $n$ sufficiently large and $b\leq 0.59n$ Enforcer
can ensure that in the strict $(1:b)$ game on 
$E(K_n)$ Avoider creates a non-planar graph. Thus, $$0.59 n \leq f_{NP_n}^-.$$
\end{lemma}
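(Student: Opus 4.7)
My plan is to apply Proposition~\ref{lower:proposition} with Enforcer cast as the second player. That proposition then guarantees an isolated set $I\subseteq V(K_n)$, $|I|\ge N:=n-(1-c)\tfrac{n^2}{2b}$, whose incident edges are all claimed by Enforcer, so Avoider's entire graph lies inside the induced complete graph on $S=V(K_n)\setminus I$ with $|S|\le (1-c)\tfrac{n^2}{2b}$. Meanwhile, the strict rule forces Avoider to claim exactly $R$ edges with $R\ge\binom{n}{2}/(b+1)$, all of them in $E(K_S)$.

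From here the idea is to invoke Euler's inequality for planar graphs: it suffices to show $R>3|S|-6$, which (after substituting the bounds above) reduces to the numerical statement
\[
\frac{n(n-1)}{2(b+1)}>3(1-c)\frac{n^2}{2b}-6.
\]
I would verify this arithmetic inequality at the boundary case $b=0.59n$, using monotonicity in $b$ to extend it inside the range $0.49n\le b\le 0.59n$ covered by Proposition~\ref{lower:proposition}. For $b<0.49n$, I would fall back on the earlier result of Hefetz, Krivelevich, Stojakovi\'c and Szab\'o~\cite{HKSS08}, which gives Enforcer a winning strategy whenever $b\le n/2-o(n)$.

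The main obstacle is the tightness of the edge-count inequality above. Both $R$ and $|S|$ are of order $n^2/(2b)$, so the ratio $R/|S|$ sits only slightly above $1/(1-c)$, while the Euler bound demands it exceed $3$; the precise choice of $c=1/1000$ and of the threshold $0.59$ are exactly what is needed to make the comparison work at the upper end of the range. If the edge count turns out to be too loose at $b=0.59n$ in spite of the slack provided by $c$, the fallback is to extract a concrete Kuratowski obstruction: use the structure of the isolation strategy in the proof of Proposition~\ref{lower:proposition} (in which Enforcer fixes triples or quadruples of ``target'' vertices every few rounds and prevents Avoider from touching all of them) to produce either a $K_5$-subgraph on Avoider's edges or a $K_{3,3}$-minor inside $K_S$, from which non-planarity follows directly.
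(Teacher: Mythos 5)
Your edge-count inequality fails, and fails badly; the constant $c=1/1000$ cannot save it. Proposition~\ref{lower:proposition} gives $|S|\le(1-c)\frac{n^2}{2b}$ while $R\ge\binom{n}{2}/(b+1)\approx\frac{n^2}{2b}$, so $R/|S|$ is barely above $1/(1-c)\approx 1.001$, whereas the bare Euler inequality $|E|\le 3|V|-6$ would need that ratio to exceed $3$. You actually notice this gap yourself, but the conclusion that \enquote{the precise choice of $c=1/1000$ and of the threshold $0.59$ are exactly what is needed to make the comparison work} is incorrect; there is an order-of-magnitude (factor roughly $3$) shortfall, not a marginal one. The proposed fallback of extracting a $K_5$ or $K_{3,3}$ directly from the isolation strategy is not developed and is not obviously achievable, since the isolation routine says nothing about the structure of Avoider's edges among the surviving vertices.

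The missing idea in the paper's proof is that Enforcer \emph{splits} his bias $b=b_1+b_2$. He spends $b_1=o(b)$ edges per round on the Bednarska--\L{}uczak cycle-prevention strategies, forcing Avoider's final graph $A$ to have girth larger than a fixed $k$, while the remaining $b_2=b(1-o(1))$ edges run the isolation strategy of Proposition~\ref{lower:proposition} (giving $|V(A)|\le(1-\frac{c}{2})\frac{n^2}{2b}$ after absorbing the $b_1$ loss). For a planar graph of girth greater than $k$, Euler's formula sharpens to $|E(A)|<\frac{k}{k-2}(|V(A)|-2)$, and $\frac{k}{k-2}\to 1$ as $k\to\infty$. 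Choosing $k$ so that $\frac{k}{k-2}(1-\frac{c}{2})<1$ turns the near-tie between $R$ and $|S|$ into an actual contradiction. Without the girth-boosting component, the density threshold for planarity stays at $3$ and the argument cannot close.
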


\begin{proof}
Since the statement is already proved for $b\leq 0.49n$ in \cite{HKSS08},
we may assume from now on that $0.49n\leq b.$ The following 
proof will be a slight modification of the one given in \cite{HKSS08}.
Let 
$c=1/1000$ be as in Proposition~\ref{lower:proposition}
and choose an integer $k\geq 3$ such that
\begin{align}\label{constant} 
\frac{k}{k-2}\left(1-\frac{c}{2}\right)<1.
\end{align}

Enforcer's strategy consists of two goals: First of all, he wants to prevent Avoider from creating cycles of length at most $k.$ Secondly, he wants to isolate a large number of vertices to ensure that
Avoider's graph lives on a small vertex set. For this he splits his bias $b=b_1+b_2$
($b_1$ and $b_2$ will be chosen later) and uses $b_1$ for his first goal, 
and $b_2$ for the second goal.

\medskip

\textbf{Preventing cycles.} It follows from the work of 
Bednarska and \L{}uczak~\cite{BL00} (see also the proof of Theorem~2.3 in~\cite{HKSS08}), that  
for every $3\leq i\leq k$ there is a constant $c_i$
such that, for sufficiently large $n$, Enforcer can prevent Avoider from
claiming a cycle of length $i$ if Enforcer is allowed to claim at least
$c_i n^{\frac{i-2}{i-1}}$ edges. 
Let $C=\max\{c_i:\ 3\leq i\leq k\}.$
Then, simultaneously playing
according to the different strategies for preventing cycles of length
$3\leq i\leq k,$
Enforcer can ensure that Avoider's graph has girth larger than $k$
if he claims at least
\[\sum _{i=3}^k c_i n^{\frac{i-2}{i-1}}\leq Ck n^{\frac{k-2}{k-1}}=:b_1\]
edges per round. Observe that $b_1=o(b)$.

\medskip

\textbf{Isolating vertices.} Let $b_2=b-b_1=b(1-o(1)).$ In each round
Enforcer uses $b_2$ edges to play according to the strategy given in the proof of 
Proposition \ref{lower:proposition}. Therefore, he isolates at least
$n - (1-c)\frac{n^2}{2b_2} \geq n - \left( 1 - \frac{c}{2} \right) \frac{n^2}{2b}$
vertices.
\medskip

Now, let Enforcer split his bias $b=b_1+b_2$, and thus play so as to prevent 
cycles of length at most $k$, while at the same time to isolate at least 
$n - \left( 1 - \frac{c}{2} \right) \frac{n^2}{2b}$ vertices. Notice, that it does not hurt Enforcer 
if the combination of the above strategies leads to claiming the same edge more than once - 
Enforcer can claim an arbitrary edge instead since this does not destroy the properties of the graph he is about 
to create.  Let $A$ be Avoider's graph at the end of the game.
We know that $|V(A)|\leq \left( 1 - \frac{c}{2} \right) \frac{n^2}{2b}$
and $\text{girth}(A)>k$. If $A$ was planar, then, by a standard application
of Euler's formula, we would have
$$|E(A)|<\frac{k}{k-2}\left( |V(A)|-2 \right)
	<\frac{k}{k-2} \left( 1 - \frac{c}{2} \right) \frac{n^2}{2b}.$$ 
However, by the number of rounds the game lasts, we have
$$|E(A)|\geq \left\lfloor \frac{\binom{n}{2}}{b+1} \right\rfloor
	>\frac{k}{k-2} \left( 1 - \frac{c}{2} \right) \frac{n^2}{2b},$$
using (\ref{constant}), for $n$ sufficiently large. Thus, Avoider's graph is
non-planar and Enforcer wins.
\end{proof}

\begin{lemma}
\label{lem:nonplanarmon}
For $n$ sufficiently large and $0.49n\leq b\leq 0.59n$ Enforcer
can ensure that  Avoider creates a non-planar graph in the monotone $(1:b)$ game on 
 $E(K_n)$.
Thus, $$0.59 n \leq f_{NP_n}^{mon}.$$
\end{lemma}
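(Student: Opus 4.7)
Plan: I would follow the proof of Lemma~\ref{lem:nonplanarstrict} step by step, with minor changes to accommodate the monotone rules, in which Avoider claims at least one edge per round and Enforcer claims at least $b$ edges per round. Enforcer will play exactly $b$ edges per round (the legal minimum), split as $b=b_1+b_2$ with $b_1=Ckn^{(k-2)/(k-1)}$ used for preventing cycles of length at most $k$ and $b_2=b-b_1$ used for isolating vertices as in Proposition~\ref{lower:proposition}. The target is the same: girth$(A)>k$ together with $|V(A)|\leq (1-c/2)\frac{n^2}{2b}$ and $|E(A)|\geq \lfloor\binom{n}{2}/(b+1)\rfloor$, which will again contradict planarity via Euler's formula for graphs of large girth and the inequality~\eqref{constant}.

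The cycle-prevention component, being a Bednarska--\L{}uczak-style potential argument, should carry over to the monotone setting essentially verbatim: the potential depends only on the current state of Avoider's graph and not on how Avoider distributed his moves between rounds, and since $b_1=o(b)$ Enforcer retains enough slack to react even to modest per-round bursts by Avoider. The isolation sub-strategy of Proposition~\ref{lower:proposition} does need a genuine adjustment, because its strict-case analysis in Cases 1--3 crucially uses that in each round FP touches at most one vertex of Enforcer's committee $\{v_1,\ldots,v_4\}$ (or $\{v_1,v_2,v_3\}$), which may fail in the monotone game when FP claims many edges at once. I would remedy this by enlarging each committee by a constant factor so that after FP's turn at least one committee vertex still survives untouched, and then run the same progressive clearing routine on that surviving vertex; this isolates one vertex per block as before, at the cost of a constant-factor efficiency loss. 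Since $c=1/1000$ is tiny, this loss is absorbed by replacing $c$ with $c/2$ in the bound on $|V(A)|$, while still preserving $\frac{k}{k-2}(1-c/2)<1$ from the choice of $k$.

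Finally, in the monotone game Avoider claims at least one edge per round while Enforcer claims exactly $b$, so $|E(A)|\geq \lfloor \binom{n}{2}/(b+1)\rfloor$ holds just as in the strict setting. Combining this with girth$(A)>k$ and $|V(A)|\leq (1-c/2)\frac{n^2}{2b}$, the same Euler-formula calculation as in Lemma~\ref{lem:nonplanarstrict} yields that $A$ is non-planar, giving $0.59n\leq f_{NP_n}^{mon}$. The main obstacle will be the reconfirmation of the isolation step of Proposition~\ref{lower:proposition} in the monotone setting, since its round-by-round accounting that limits FP to at most one touched committee vertex per round fails whenever FP claims several edges in a single round; enlarging the committee at the cost of a small weakening of the isolation constant is the key adjustment that should make the proof go through.
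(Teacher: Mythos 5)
Your proposal identifies the right obstacle — the round-by-round accounting in Proposition~\ref{lower:proposition} assumes Avoider touches at most one committee vertex per round, which fails under the monotone rules — but your remedy does not work. In the monotone game there is no upper bound on the number of edges Avoider may claim in a single round: he could claim $n/2$ or more edges at once, touching $\Omega(n)$ vertices, so no committee of any fixed constant size (or indeed any size $o(n)$) is guaranteed to survive his turn. The same concern applies to your appeal to the Bednarska--\L{}uczak cycle-prevention strategy: that argument is calibrated to respond to one move of the opponent per round, and ``the potential depends only on the current state'' does not by itself justify that Enforcer keeps pace when Avoider makes many moves between Enforcer's turns. What your plan is missing is a mechanism to handle the trade-off between Avoider playing few edges per round (in which case the strict-game analysis would apply) and Avoider playing many edges per round (in which case he quickly accumulates a dense graph of his own).

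The paper's proof sidesteps the entire issue with a different and cleaner device. It fixes a subgraph $A^*\subseteq A$ consisting of \emph{exactly one} of Avoider's edges from each round, and has Enforcer run the strict-game strategy of Lemma~\ref{lem:nonplanarstrict} against $A^*$ rather than against $A$ (discarding any instruction to claim an edge already in $A\setminus A^*$). Since $A^*$ grows by exactly one edge per round, the strict-case analysis of the isolation and cycle-prevention sub-strategies applies verbatim to $A^*$, giving $|V(A^*)|\leq(1-c/2)\frac{n^2}{2b}$ and girth$(A^*)>k$. The proof then splits on whether $A$ has more than $3n$ edges: if so, $A$ is non-planar outright by Euler's formula; if not, the game lasts at least $\frac{\binom{n}{2}-3n}{b}=\frac{n^2}{2b}(1-o(1))$ rounds, so $|E(A^*)|$ is large and the strict-case Euler-formula contradiction shows $A^*$ (hence $A$) is non-planar. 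This case split is precisely what handles Avoider's freedom to claim many edges per round, and it is the idea your proposal lacks. You would need to rebuild your argument around some analogous bookkeeping device (or a direct adaptation of this one) rather than rely on enlarging the committee.
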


\begin{proof}
Let $A$ be Avoider's graph throughout the game,
and let $A^*\subseteq A$ be a subgraph consisting
of exactly one edge from every round played so far.
Enforcer claims in every round exactly $b$ edges
according to the strategy given in the proof of the previous lemma,
assuming $A^*$ to be Avoider's graph. 
If this strategy asks Enforcer
to claim an edge from $A\setminus A^*$, he will claim another
arbitrary edge instead. We distinguish two cases.

\medskip

{\bf Case 1. $\mathbf{|V(A)|> 3n}$.} Then, by Euler's formula,
Avoider's graph is non-planar and Enforcer wins.

\medskip

{\bf Case 2. $\mathbf{|V(A)|\leq 3n}$.} Then
the number of rounds the game lasts is at least
$\frac{\binom{n}{2}-3n}{b}= \frac{n^2}{2b}(1-o(1)),$
which also gives 
$$|E(A^*)|\geq \frac{n^2}{2b}(1-o(1)).$$
By the above described strategy we get again, similar to the proof of Lemma~\ref{lem:nonplanarstrict}, 
$|V(A^*)|\leq \left( 1 - \frac{c}{2} \right) \frac{n^2}{2b}$
as well as $\text{girth}(A^*)>k,$ ensuring that $A^*$ cannot be planar provided that $n$ is large enough.
\end{proof}

%


\begin{proof}[Proof of Proposition~\ref{prop:lowerbound}]
This proposition follows directly from Lemma~\ref{lem:nonplanarstrict} and Lemma~\ref{lem:nonplanarmon}. 
\end{proof}

\section{Open questions}
For each of the games considered for Corollary \ref{cor:first}, we have shown that the lower and upper threshold
bias differ at most by a factor of $\ln n$. However, we believe that this factor
can be replaced by some constant. We wonder whether this can already be done for the strategy we analyzed in the proof of Theorem~\ref{thm:forestplus}, where we have shown that Avoider can keep his graph almost acyclic.   

\begin{question}
Is there a constant $C>0$ such that the following holds: For $n$ sufficiently large
and $b\geq Cn$, Avoider has a strategy that creates at most one cycle
in the monotone/strict $(1:b)$ game? 
\end{question}

In case the question above can be answered positively, the following conjecture would follow immediately. 

\begin{conjecture}[\cite{HKSS08}]
The Avoider-Enforcer non-planarity, non-k-colorability and $K_t$-minor games are asymptotically monotone for every $k\ge 3$ and $t \ge 4$. 
\end{conjecture}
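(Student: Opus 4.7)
The conjecture asserts $f_{\cP_n}^- = \Theta(f_{\cP_n}^+)$ (and similarly for the monotone threshold) for each $\cP_n \in \{NP_n, NC_n^k, M_n^t\}$. Since Corollary~\ref{cor:first} gives $f_{\cP_n}^+, f_{\cP_n}^{mon} \leq 200 n\ln n$ while the available lower bounds on $f_{\cP_n}^-$ are only linear in $n$, the natural route is to shave the $\ln n$ factor from the upper bound --- precisely the content of the preceding Question, whose positive resolution would imply the conjecture by exactly the argument of Corollary~\ref{cor:first}. My plan is therefore to strengthen Theorem~\ref{thm:forestplus} from bias $200 n\ln n$ to bias $Cn$ for some absolute constant $C$, and then conclude as in Corollary~\ref{cor:first}.

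To see where a logarithm might plausibly be saved, I would revisit Claim~\ref{claim:comp}. The bound $n_k \leq n/(10\ln n)^{k-1}$ is chosen so that the total number of unclaimed edges when Avoider reaches the last stage $t=\Theta(\ln n)$ is at most $200 n\ln n$; the factor $10\ln n$ is forced by the recursion
\[
n_k \leq \frac{1}{b+1}\Bigl(\sum_{\substack{1\leq i\leq j\leq k-1\\ i+j\geq k}} ij\, n_i n_j + (k-1)n\Bigr) + 1.
\]
I would try to replace this coordinate-wise induction by an amortized potential argument: introduce $\Phi = \sum_k \alpha_k n_k$ with geometrically increasing weights $\alpha_k$, and show that at bias $b = Cn$ each Enforcer round can destroy only $O(n)$ units of $\Phi$, while Avoider's minimum-joint-size merges drive $\Phi$ down at least as fast. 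If this can be pushed through, then by the final stage only $O(n)$ unclaimed edges remain, so Avoider can still finish with a forest plus at most one extra edge exactly as before.

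A more modular alternative is a two-phase strategy: first merge all components up to some fixed constant size $s_0$, which costs only $O(n/s_0)$ merges; then, in subsequent phases, exploit the fact that once all components have size at least $s_0$ the dangerous sum $\sum_{i+j\geq k} ij\, n_i n_j$ can be controlled much more tightly than in the original argument, so that the induction closes at linear bias. Iterating over $O(\log\log n)$ geometric scales of $s_0$ should already save the $\ln n$ factor, and would then imply the conjecture for all three games simultaneously.

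The main obstacle, and presumably the reason this is posed only as a question, is that the recursive bound in Claim~\ref{claim:comp} becomes tight once the per-stage decay constant drops from $\Theta(\ln n)$ to $O(1)$: the quadratic sum $\sum_{i+j\geq k} ij\, n_i n_j$ of potential merging edges then cannot simply be absorbed by the linear budget $b = Cn$ of edges per round. Overcoming this probably requires Avoider either to commit in advance to a rigid hierarchical merging schedule (so that Enforcer cannot concentrate his edges against specific merges Avoider needs), or to invoke a randomized strategy in the spirit of Bednarska and \L{}uczak~\cite{BL00}; either would be a genuinely new ingredient beyond the techniques of the present paper.
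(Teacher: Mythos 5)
The statement you were asked to prove is a \emph{conjecture}, attributed to Hefetz, Krivelevich, Stojakovi\'c, and Szab\'o \cite{HKSS08}; the paper does not prove it and explicitly presents it as an open problem in Section~4. The only thing the paper establishes is that the conjecture would follow immediately from a positive answer to the Question preceding it (whether Avoider can keep his graph nearly acyclic at bias $b\ge Cn$), which is also left open. Your write-up is therefore not a proof, and correctly does not pretend to be one: you accurately identify the reduction to the preceding Question, you accurately identify that the logarithmic gap comes from the $(10\ln n)^{k-1}$ decay in Claim~\ref{claim:comp}, and you accurately identify the obstacle --- that at bias $\Theta(n)$ the quadratic term $\sum_{i+j\ge k} ij\,n_i n_j$ can no longer be absorbed by the per-round budget, so the induction does not close. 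Your two suggested workarounds (an amortized potential function with geometric weights, and a multi-scale merging schedule) are reasonable-sounding but remain unsubstantiated heuristics; neither is carried out, and indeed the paper gives no indication that either works. In short: you have correctly diagnosed the status of the statement (open) and located the bottleneck precisely where the paper does, but you have not supplied --- and were not in a position to supply --- a proof.
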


Our result on the lower threshold bias for the non-planarity game is obtained
by splitting Enforcer's strategy into two parts. The first part,  
based on the strategy from \cite{HKSS08}, is
to prevent small cycles in Avoider's graph. The second part is to isolate a large number
of vertices. So, our improvement was obtained by studying a positional game
in which one player has the goal to isolate as many vertices as possible. This game itself seems to be of interest.

\begin{question}
Let $b\in \mathbb{N}$. What is the largest number of vertices that the second player can isolate in a 
$(1:b)$ game on $E(K_n)$ under the strict rules?
\end{question}




\subsection*{Acknowledgement}
We are grateful to Milo\v{s} Stojakovi\'{c} for helping us to further improve our bounds in Theorem~\ref{thm:forestplus} and Theorem~\ref{thm:forest}.




\bibliographystyle{amsplain}
\bibliography{references}

\end{document}